\newtheorem{theorem}{Theorem}[section]
\newtheorem{conjecture}[theorem]{Conjecture}
\theoremstyle{remark}
\newtheorem*{sketch of proof}{Idea of Proof}
\theoremstyle{definition}
\newcommand{\script}[1]{\text{$\cal{#1}$}}
\newcommand{\comment}[1]{}
\begin{document}

\title{Elementary Techniques for Erd\H os--Ko--Rado-like Theorems}
\author{Greg Brockman and Bill Kay\\
\small Harvard University, University of South Carolina}
\date{\today}

\maketitle

\begin{abstract}
The well-known Erd\H os--Ko--Rado Theorem states that if $\script F$ is a family of $k$-element subsets of $\{1,2,\ldots,n\}$ ($n\geq 2k$) satisfying $S,T\in\script F\Rightarrow|S\cap T|\geq 1$, then $|\script F|\leq\binom{n-1}{k-1}$.  The theorem also provides necessary and sufficient conditions for attaining the maximum.  We present elementary methods for deriving generalizations of the Erd\H os--Ko--Rado Theorem on several classes of combinatorial objects.  We also extend our results to systems under Hamming intersection.
\end{abstract}

\section{Introduction}

As the reader knows, mathematics is many things, but it is never a one-stop trip.  The development of a mathematical theory sees shaping by many hands, beginning with the innovator who first dreamed up its foundations and ending years down the road, if at all.  Often, the discovery of one fundamental theorem will tip over the domino of another result in an observer's mind, prompting new results and raising new questions.  These results can then prove to be a catalyst of innovation in the work of others, leading to the development of a broad-reaching mathematical monolith.

The Erd\H os--Ko--Rado Theorem is such a fundamental result.

This discovery was announced to the world in 1961 with the publication of the seminal paper \textit{Intersection theorems for systems of finite sets} by Paul Erd\H os, Chao Ko, and Richard Rado~\cite{ekr}.  Interestingly enough, they had the paper essentially completed as early as 1938; however, one reason they did not publish it was due to what Erd\H os~\cite{workwithrado} describes as ``relatively little interest in combinatorics'' at the time.  Ironically, the Erd\H os--Ko--Rado paper has since become perhaps the most oft-cited of Erd\H os's joint work with Rado.

So what is this celebrated theorem?  Let $[n] := \{1,2,\ldots, n\}$, and consider any family $\script F$ of $k$-sets on $[n]$ (by this, we mean $k$-element subsets of $[n]$).  If we are told that $\script F$ is \textit{intersecting}, that is, every pair of $k$-sets $S,T\in\script F$ satisfy $|S\cap T|\geq 1$, what is the maximum possible size of our family $\script F$?  Note that if $n < 2k$, by the Pigeonhole Principle \textit{every} pair of $k$-sets has a nonempty intersection, and hence any family $\script F$ of these sets will be intersecting---not a hugely subtle case.  However, if we restrict $n\geq 2k$, things start getting interesting.

In this case, there is a plethora of non-intersecting families of $k$-sets.  However, a bit of inspection reveals that there is an easy-to-define intersecting family that is quite populous: in particular, take an arbitrary element of $[n]$ (say, 1) and consider the family of all sets containing that element.  Such a family we will dub \textit{trivially intersecting}, or just \text{trivial} for short.  It is not hard to count that all trivial families have $\binom{n-1}{k-1}$ elements.  But as always in mathematics, we are forced to ponder, can we do any better than this, given sufficient cleverness?  How often is it, really, that the obvious way turns out to be the best one?

And this is where the Erd\H os--Ko--Rado Theorem comes into play: the theorem's statement is that indeed the trivial family is actually (strictly) the best!  (Incidentally, there is a mnemonic to describe situations such as this: ``TONCAS,'' or ``The Obvious Necessary Conditions are Also Sufficient.''  We borrow this term from West ~\cite{introductiontographtheory}, who attributes it to Nash-Williams and others.)  The original method for proving the Erd\H os--Ko--Rado Theorem was two pages worth of induction, casework, and contradiction.  However, in 1972, Katona~\cite{katona} published a quite short and strikingly beautiful probabilistic proof, which we have adapted below.

\begin{theorem}[Erd\H os--Ko--Rado, henceforth EKR] Let $\script F$ be an intersecting family of $k$-sets on $[n]$, $n\geq 2k$.  Then
$$\max |\script F| \leq \binom{n-1}{k-1},$$
with equality if and only if $\script F$ is trivial.
\end{theorem}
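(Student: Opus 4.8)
The plan is to run Katona's averaging argument over cyclic orderings of $[n]$, the proof the introduction already points to. Call a placement of $1,\dots,n$ on the vertices of a regular $n$-gon, taken up to rotation, a \emph{cyclic order}; there are $(n-1)!$ of them. Say that a $k$-set is an \emph{arc} of a given cyclic order if its elements occupy $k$ consecutive vertices. The argument splits into a local statement — a bound on how many members of $\script F$ can be arcs of one fixed cyclic order — and a global double count that converts the local bound into the desired inequality.

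For the local statement I would prove that \emph{in every fixed cyclic order, at most $k$ members of $\script F$ are arcs}. If no member is an arc there is nothing to show, so suppose $A \in \script F$ is an arc. An arc of the order meets $A$ precisely when it is a cyclic shift of $A$ by at most $k-1$ positions in either direction, which gives exactly $2k-1$ arcs meeting $A$. Pairing the shift of $A$ by $m$ positions with the shift by $m-k$ positions, for $m = 1,\dots,k-1$, partitions the $2k-2$ arcs other than $A$ into $k-1$ pairs; the two arcs in each pair together occupy $2k$ consecutive vertices, so — using $n \geq 2k$ — they are disjoint, whence $\script F$ contains at most one of them. Since every arc of this order lying in $\script F$ must meet $A$, the members of $\script F$ that are arcs number at most $1 + (k-1) = k$. (This is the only place $n \geq 2k$ is used.)

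For the global count I would count incidences $(\sigma, S)$, where $\sigma$ ranges over the $(n-1)!$ cyclic orders and $S \in \script F$ is an arc of $\sigma$. A fixed $k$-set $S$ is an arc of exactly $(n-k)!\,k!$ cyclic orders (glue $S$ into one block, cyclically order the resulting $n-k+1$ objects, then order $S$ within its block). Applying the local bound to each of the $(n-1)!$ cyclic orders gives
$$|\script F| \cdot (n-k)!\,k! \;\leq\; k \cdot (n-1)!,$$
and dividing through yields $|\script F| \leq \binom{n-1}{k-1}$.

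The equality characterization is the subtle half, and I expect it to be the main obstacle. One direction is immediate: a trivial family has exactly $\binom{n-1}{k-1}$ members. For the converse, equality above forces equality in the local lemma for \emph{every} cyclic order, so each cyclic order has exactly $k$ members of $\script F$ among its arcs; I would try to exploit this rigidity — tracking how the forced choice of one arc from each disjoint pair must change under a transposition of two adjacent vertices — to conclude that all members of $\script F$ share a common element, or else bring in a shifting/compression argument to do the same job. I anticipate this propagation step to be where the genuine work lies. I would also flag a subtlety the stated theorem glosses over: the clean ``equality iff trivial'' conclusion really needs $n > 2k$. When $n = 2k$, two $k$-sets meet unless they are complementary, so any choice of one set from each of the $\tfrac12\binom{2k}{k}$ complementary pairs is a maximum intersecting family — for instance $\{1,2\},\{1,3\},\{2,3\}$ when $n=4$, $k=2$ — and such a family need not be trivial; so for $n = 2k$ I would claim only the size bound.
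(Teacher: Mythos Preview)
Your proof is correct and is essentially Katona's cyclic argument, the same approach the paper takes (the paper works with all $n!$ linear permutations read cyclically rather than the $(n-1)!$ cyclic orders, and bounds the local count via a ``least head'' argument rather than your pairing of disjoint arcs, but the double count is identical after the trivial factor of $n$ cancels). Your handling of the equality case is actually more careful than the paper's --- the paper simply leaves it to the reader --- and you are right to flag the $n=2k$ exception, which the theorem as stated glosses over.
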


\begin{proof}
Let $\sigma = \sigma(1)\sigma(2)\ldots \sigma(n)$ be a permutation of $[n]$.  We'll say that $\sigma$ \textit{contains} a $k$-set $S\in\script F$ if all of $S$'s elements appear in consecutive positions in $\sigma$ when $\sigma$ is read cyclically.  For example, if $n=7,k=3$, and $\sigma=(5,2,1,7,4,6,3)$, then $\sigma$ would contain $\{1,2,5\}$ and $\{3,5,6\}$, among others (but not, say, $\{5,7,6\}$).  If $S$ is contained in $\sigma$, define the \textit{head} of $S$ to be the first element of $S$ to appear in $\sigma$.

Notice that $\sigma$ can contain at most $k$ elements of $\script F$: let $i$ be the least number such that $\sigma(i)$ is the head of one of $\script F$'s $k$-sets that is contained in $\sigma$.  Then the only other possible heads of these $k$-sets are $\sigma(i+1),\sigma(i+2),\ldots,\sigma(i+k-1)$, since $\script F$'s sets all intersect the $k$-set $\{\sigma(i),\sigma(i+1),\ldots,\sigma(i+k-1)\}$.

On the other hand, every $k$-set in $\script F$ is contained in exactly $k!\cdot n\cdot (n-k)!$ ($k!$ orderings of the set, $n$ choices of which position in $\sigma$ to place the head, and $(n-k)!$ orderings of the other elements in $\sigma$) permutations of $[n]$.  Since there are $n!$ total permutations, we combine all of this to obtain
$$|\script F| k!\cdot n\cdot (n-k)! \leq n!\cdot k,$$
which rearranges to the desired
$$\script F \leq \frac{(n-1)!}{(k-1)!(n-k)!} = \binom{n-1}{k-1}.$$
We leave to the reader the proof of the equality case.
\end{proof}

In the spirit of Katona's proof, we seek in this paper to provide elementary techniques for extending the EKR Theorem.  Whereas many EKR papers involve higher-level algebraic or other advanced techniques, we intentionally stay simple in our tactics.  While we correspondingly cede some generality in our theorems, we note that proofs requiring little machinery have a natural appeal, and it is always a surprise and a pleasure to discover the true potential of elementary methods. While the Erd\H os--Ko--Rado theorem applies specifically to sets, the directions that we will be most concerned with involve finding similar results for other combinatorial objects.  In particular, we will be examining $k$-permutations, words, and multisets, including considering what happens when the standard notion of intersection is replaced by Hamming intersection.

\section{Historical Overview}

The original EKR Theorem appeared at the head of a wave of results in what is now known as extremal set theory.  It was not strictly the first result in the field, appearing after works such as Ramsey's Theorem~\cite{ramseytheory} and the Erd\H os-Szekeres paper on combinatorial geometry~\cite{happyending} as well as taking inspiration from the work of Sperner~\cite{sperner}; however, the EKR paper was certainly a pioneer.  In the literature, there exist many generalizations of the theorem, and we provide only a sampling of them here.  Results immediately relevant to our current study will be mentioned as needed.

One very direct generalization is to think about families that are $t$-intersecting; that is, for $S,T\in\script F$, we have $|S \cap T|\geq t$.  Again, we can think about the ``trivial'' family that contains all the $k$-sets sharing a fixed core of $t$ elements; such a $t$-intersecting family contains $\binom{n-t}{k-t}$ elements.  Indeed, the original EKR paper proved that for $n$ large enough, this is always optimal.  Later, Frankl~\cite{franklt} applied random walks to show that $n$ is ``large enough'' when $n\geq (t + 1)(k - t + 1 )$ given that $t\geq 15$; Wilson~\cite{wilson} then extended this result to all $t$ using linear algebraic techniques.

Others have since determined what happens when $n<(t + 1)(k - t + 1 )$.  These partial results culminated in Ahlswede and Khachatrian's~\cite{completeintersection} Complete Intersection Theorem.  This theorem is as powerful as its name seems to imply, and its proof both utilized and honed high powered techniques that have wide-reaching implications for EKR-type results.

Another vein of generalizations has been in abstracting to objects other than intersecting families of $k$-sets.  Some have looked at allowing $\script F$ to be partitioned into a fixed number of classes which must be intersecting (instead of $\script F$ as a whole); Frankl and F\" uredi~\cite{franklandfuredi} is one example of such a study.  Other works include Katona~\cite{katonaintersection}, which uses elegant techniques to arrive at an intersection theorem for systems of sets; there are also results for partitions, such as Meagher and Moura~\cite{setpartitions} or Ku and Renshaw~\cite{permutationspartitions}, and a variety of other classes of combinatorial objects.

There is also a contingent of EKR papers that seek not to derive new results but instead to provide new proofs for old ones.  Katona's proof is one such paper, but there are a number more.  These include Balogh and Mubayi~\cite{shortproof} as well as Frankl and Tokushige~\cite{frankl-erdoskorado}, amongst others.  In general, there is much to be gained from a fresh perspective and departure from standard techniques.  We would like to emphasize that this in no way detracts from the immensely clever and powerful methods used to obtain these results in the first place; rather, these reproofs serve to broaden our understanding of the relevant results.

\section{Families Under Standard Intersection}

In this section, we will derive Erd\H os--Ko--Rado-like theorems for families of objects under the standard notion of intersection.  Our first modification to the EKR problem will be to add a component of order to our $k$-sets.  So instead of looking at families of $k$-sets, we will look at families of $k$-permutations of $[n]$ (we define a $k$-permutation as an ordered $k$-set).  We'll define a family $\script P$ of $k$-permutations to be \textit{intersecting} if every $P,Q\in\script P$ satisfy $|P\cap Q|\geq 1$.  A trivial family has all of its $k$-permutations sharing a common element (and contains all possible sets possessing said element).

\begin{theorem}
\label{restricted words}
Let $\script P$ be an intersecting collection of $k$-permutations on $[n]$, $n\geq 2k$.  Then
$$|\script P|\leq k!\binom{n-1}{k-1}.$$
Furthermore, equality occurs if and only if $\script P$ is a trivial family.
\end{theorem}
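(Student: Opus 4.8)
The plan is to reduce Theorem~\ref{restricted words} to the EKR Theorem by a simple counting argument. To each $k$-permutation $P$ on $[n]$ associate its underlying (unordered) $k$-set, which we denote $\overline{P}$. Given an intersecting family $\script P$ of $k$-permutations, let $\script F := \{\overline{P} : P\in\script P\}$ be the family of underlying $k$-sets. Two permutations $P,Q\in\script P$ satisfy $|P\cap Q|\geq 1$ if and only if $|\overline{P}\cap\overline{Q}|\geq 1$, so $\script F$ is an intersecting family of $k$-sets on $[n]$; by EKR we have $|\script F|\leq\binom{n-1}{k-1}$. Since each $k$-set is the underlying set of exactly $k!$ distinct $k$-permutations, $|\script P|\leq k!\,|\script F|\leq k!\binom{n-1}{k-1}$, giving the bound.

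For the equality case, suppose $|\script P| = k!\binom{n-1}{k-1}$. First I would argue that $\script P$ must contain \emph{all} $k!$ orderings of each set in $\script F$: if some $P\in\script P$ had a reordering $P'$ with $\overline{P'}=\overline{P}$ but $P'\notin\script P$, then $|\script P| < k!|\script F| \le k!\binom{n-1}{k-1}$, a contradiction. Hence $\script P$ is precisely the set of all $k$-permutations whose underlying set lies in $\script F$, and $|\script F| = \binom{n-1}{k-1}$ forces (by the equality case of EKR) that $\script F$ is a trivial intersecting family of $k$-sets, say all $k$-sets containing a fixed element $x$. But then $\script P$ is exactly the family of all $k$-permutations containing $x$, which is by definition a trivial family of $k$-permutations. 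Conversely, a trivial family of $k$-permutations consists of all orderings of all $k$-sets through a fixed point, so it has size $k!\binom{n-1}{k-1}$ and is clearly intersecting; this establishes the "if" direction.

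The only mild subtlety — and the step I expect to require the most care — is the bookkeeping in the equality argument: one must be sure that $|\script P| = k!|\script F|$ really does force every ordering of every member of $\script F$ to appear, and that no smaller intersecting $\script F$ can be compensated for by "extra" permutations, which is immediate here since each set contributes at most $k!$ permutations regardless. Everything else is a routine translation through the map $P\mapsto\overline{P}$, and no new machinery beyond Theorem~1.1 is needed.
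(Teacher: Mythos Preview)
Your proof is correct and, like the paper's, reduces the problem to the classical EKR Theorem, but via a different decomposition. The paper partitions $\script P$ by \emph{ordering pattern}: it sets $P\sim Q$ whenever $p_i\le p_j \Leftrightarrow q_i\le q_j$ for all $i,j$, observes that each of the (at most) $k!$ equivalence classes is in natural bijection with an intersecting family of $k$-sets, and applies EKR separately to every class. You instead collapse $\script P$ onto the single family $\script F$ of underlying sets, apply EKR once to $\script F$, and then use the crude fiberwise bound of $k!$ orderings per set. Your route is arguably cleaner for the equality analysis: tightness of the chain $|\script P|\le k!\,|\script F|\le k!\binom{n-1}{k-1}$ immediately forces $\script F$ to be a trivial star and every ordering of every member of $\script F$ to be present, so $\script P$ is trivial in one stroke. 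The paper's approach instead yields that each pattern-class is individually a trivial family (with its own fixed point $x_\sigma$), and one must then argue that all classes share the \emph{same} fixed point---a step the paper leaves as ``not hard to show.'' Both arguments are equally elementary and use nothing beyond Theorem~1.1 itself.
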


\begin{proof}
We break $\script P$ up into classes that have the same relative ordering of their elements.  Define an equivalence relation $\sim$ on $\script P$ such that for $P=(p_1,\ldots, p_k),Q=(q_1,\ldots, q_k)$, $P\sim Q$ when for all $1\leq i,j\leq k$,
$$p_i \leq p_j \Leftrightarrow q_i \leq q_j$$
(that is, $P$ and $Q$ have the same ordering of their elements).  Then we can apply the Erd\H os--Ko--Rado theorem to find that each equivalence class has at most $\binom{n-1}{k-1}$ elements, and there are clearly at most $k!$ equivalence classes, leading to a total of at most $k!\binom{n-1}{k-1}$ elements.  Furthermore, this maximum is attained only if each equivalence class is a trivial family; it is not hard to show that $\script P$ must also have been a trivial family.  After we check that the trivial family attains the maximum value, the result follows.
\end{proof}

Note that this theorem followed by strategically reducing our new problem to a previous EKR result.  We will see that this is a general theme, and hence a technique of some promise.

We now head in a different vein.  Instead of imposing order on our sets, we'll drop the restriction that elements must appear only once in our sets.  That is, we'll consider intersecting families of $k$-multisets, or multisets with $k$-elements.  The definition of an intersecting family $\script M$ of multisets is precisely analogous to what we've seen before (every pair of elements in $\script M$ have at least one element in common), and a trivial family is the collection of all multisets containing a fixed element of $[n]$.

\begin{theorem}
\label{multisets}
Let $\script M$ be an intersecting collection of $k$-multisets on $[n]$, $n\geq 2k$.  Then
$$|\script M| \leq \binom{n+k-2}{k-1}$$

Equality is attainable and occurs if and only if $\script M$ is a trivial collection; that is, there is some element that belongs to every multiset of $\script M$.
\end{theorem}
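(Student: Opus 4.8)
The plan is to reduce to the set EKR theorem by recording, for each multiset, its \emph{support} — the set of distinct elements that appear in it — and then summing the EKR bound over all possible support sizes, the sum collapsing by Vandermonde to exactly $\binom{n+k-2}{k-1}$.

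First I would fix notation: for a $k$-multiset $M$ on $[n]$ let $S(M)\subseteq[n]$ be its support, so $1\le|S(M)|\le k<n$ (the last inequality because $n\ge 2k$). For $1\le j\le k$ set $\mathcal T_j=\{\,S(M):M\in\script M,\ |S(M)|=j\,\}$. Two multisets share a common element exactly when their supports meet, so each $\mathcal T_j$ is an \emph{intersecting} family of $j$-subsets of $[n]$; since $n\ge 2k\ge 2j$, the EKR theorem gives $|\mathcal T_j|\le\binom{n-1}{j-1}$.

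Next I would count $\script M$ through its supports. The number of $k$-multisets on $[n]$ with a prescribed support $T$ of size $j$ equals the number of compositions of $k$ into $j$ positive parts, namely $\binom{k-1}{j-1}$; hence
$$|\script M| \;=\; \sum_{j=1}^{k}\ \sum_{T\in\mathcal T_j} \#\{M\in\script M: S(M)=T\} \;\le\; \sum_{j=1}^{k}|\mathcal T_j|\binom{k-1}{j-1} \;\le\; \sum_{j=1}^{k}\binom{n-1}{j-1}\binom{k-1}{j-1}.$$
By the Vandermonde identity the last sum is $\binom{n+k-2}{k-1}$, which is the claimed bound. I would then note that the trivial family (all $k$-multisets containing a fixed $x$) is in bijection, by deleting one copy of $x$, with the $(k-1)$-multisets on $[n]$, of which there are $\binom{n+k-2}{k-1}$, so the bound is attained.

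For the equality case I would unwind the two inequalities above. Equality forces (i) $|\mathcal T_j|=\binom{n-1}{j-1}$ for every $j\in\{1,\dots,k\}$ (each factor $\binom{k-1}{j-1}$ being positive), so by the uniqueness clause of EKR each $\mathcal T_j$ consists of all $j$-sets through some fixed point $x_j$; and (ii) for each occurring support $T$, \emph{all} $\binom{k-1}{|T|-1}$ multisets with support $T$ lie in $\script M$. Since $\{x_1\}\in\mathcal T_1$ must meet every member of $\bigcup_j\mathcal T_j$ (the family of supports of $\script M$ is intersecting), a short argument — exhibiting, whenever $x_j\ne x_1$, a $j$-set through $x_j$ avoiding $x_1$, which $n\ge 2k$ makes room for — forces $x_1=x_2=\cdots=x_k=:x$; then the occurring supports are precisely the subsets of $[n]$ of size $\le k$ containing $x$, and by (ii) $\script M$ is exactly the set of all $k$-multisets containing $x$, i.e. a trivial family. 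The main thing to get right is this final step — checking that the several EKR centers must coincide and that this pins down $\script M$ completely — together with the bookkeeping in the Vandermonde step; the upper bound itself falls out immediately once the support decomposition is in place. (The borderline case $n=2k$, where EKR's uniqueness clause is the delicate one, is the only place the argument needs extra care.)
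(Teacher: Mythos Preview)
Your approach is essentially identical to the paper's: partition $\script M$ by support size, apply set-EKR to the family of supports at each level, multiply by the $\binom{k-1}{j-1}$ compositions count, and collapse the sum via Vandermonde. For the equality case the paper takes a shorter route than you do: rather than invoking EKR uniqueness at every level $j$ and then arguing that the centers $x_j$ coincide, it simply observes that equality at $j=1$ already forces a multiset $\{x,x,\ldots,x\}$ into $\script M$, whence every other member of $\script M$ must contain $x$ --- this sidesteps entirely the $n=2k$ delicacy you flagged.
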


\begin{proof}
Let $S_i(\script M)=\{M | M\in\script M \text{ and $M$ contains precisely $i$ distinct elements}\}$.  Since
$$\bigcup_{i=1}^k \script S_i = \script M,$$
we see that $\{\script S_i\}$ defines a partition of $\script M$.  Furthermore, since $\script M$ is intersecting, it follows that each $\script S_i$ is intersecting as well.  If $M$ is a multiset and $S$ is a set, we say that $M$ \textit{reduces to} $S$ if the elements of $M$ and $S$ are precisely the same.  Let $\script S_i'=\{S:\exists M\in \script S_i\text{ such that $M$ reduces to $S$}\}$.  Since $S_i'$ is an intersecting family of $i$-sets, by the Erd\H os-Ko-Rado theorem we have that $|S_i'|\leq\binom{n-1}{i-1}$.

Now let $S \in S_i'$.  The number of $k$-multisets that reduce to $S$ can be calculated as $\binom{k-1}{i-1}$ using your standard stars-and-bars counting argument.  So we have that $|S_i| \leq \binom{k-1}{i-1}\binom{n-1}{i-1}$.  Thus, we have that
$$\begin{array}{lll}
|M| = \sum_{i=1}^{k} |S_i| & \leq & \sum_{i=1}^{k} \binom{k-1}{i-1}\binom{n-1}{i-1}\\
                           & =    & \sum_{i=0}^{k-1} \binom{k-1}{i}\binom{n-1}{n-i-1}\\
                           & =    & \binom{n+k-2}{k-1}
\end{array}$$
after massaging appropriately.

Now if $\script M$ is a trivial family, we can apply the same sort of counting argument to obtain $|\script M| = \binom{n+k-2}{k-1}$, the above maximal value.  Conversely, if $\script M$ is an intersecting family such that $|\script M|$ is maximal, then we must have equality in all of the inequalities we summed.  In particular, $S_1 = \binom{k-1}{0}\binom{n-1}{0} = 1$, meaning that $\script M$ contains a set with only one distinct element.  Since $\script M$ is intersecting, every other element of $\script M$ contains this element as well, implying that $\script M$ is a trivial collection.
\end{proof}

Finally, we make one more transition to round out this section.  We now both add a component of order and remove the restriction that elements must appear only once in our $k$-sets.  That is, we now consider $k$-words on $[n]$, or ordered $k$-tuples such that each entry is an element of $[n]$.  We will keep the same concept of intersection (the intersection $W \cap V$ between two words is defined as the multiset of elements that appear somewhere in both $W$ and $V$, multiplicities included), intersecting families, and trivially intersecting families.

\begin{theorem}
\label{words}
Let $\script W$ be an intersecting collection of $k$-words on $[n]$, $n\geq 2k$.  Then
$$|\script W|\leq n^k - n^{k-1}.$$
Furthermore, $|\script W|$ attains this maximal value when and only when $\script W$ is trivial.
\end{theorem}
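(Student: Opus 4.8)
The plan is to mirror the proof of Theorem~\ref{multisets}: stratify $\script W$ according to how many distinct letters a word uses, bound each stratum by reducing to the EKR theorem, and recombine. For a $k$-word $W$ write $\operatorname{supp}(W)\subseteq[n]$ for the set of letters occurring in $W$, and for $1\le i\le k$ put $\script W_i=\{W\in\script W:|\operatorname{supp}(W)|=i\}$, so that $\script W=\script W_1\cup\cdots\cup\script W_k$ is a partition. Two $k$-words are intersecting in the sense defined above (they share a common letter) exactly when their supports meet; hence each $\script W_i$ is again an intersecting family, and $\{\operatorname{supp}(W):W\in\script W_i\}$ is an intersecting family of $i$-element subsets of $[n]$. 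Since $n\ge 2k\ge 2i$, the EKR theorem applies and shows this family has at most $\binom{n-1}{i-1}$ members. Moreover the $k$-words with a prescribed support $S$, $|S|=i$, are precisely the surjections $[k]\to S$, of which there are $\operatorname{Surj}(k,i)$, the number of surjections from a $k$-set onto an $i$-set. Therefore $|\script W_i|\le\binom{n-1}{i-1}\operatorname{Surj}(k,i)$.

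Summing over $i$ yields
$$|\script W|\leq\sum_{i=1}^{k}\binom{n-1}{i-1}\operatorname{Surj}(k,i),$$
and I would then identify the right-hand side. Choosing an $i$-subset of $[n]$ that contains a fixed letter, say $1$, together with a surjection of $[k]$ onto it, is the same as choosing a $k$-word that uses the letter $1$; summing over $i$ therefore shows the displayed sum counts the $k$-words containing a fixed letter, so it equals $n^k-(n-1)^k$---exactly the size of a trivial family. (One can also reach this closed form from Pascal's rule $\binom{n}{i}=\binom{n-1}{i}+\binom{n-1}{i-1}$ and the identity $\sum_{i\ge 0}\binom{m}{i}\operatorname{Surj}(k,i)=m^k$, which merely sorts the functions $[k]\to[m]$ by the size of their image.) In particular the bound displayed in the statement should read $n^k-(n-1)^k$.

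For the equality clause, observe that the estimate was obtained term by term, so $|\script W|=n^k-(n-1)^k$ forces $|\script W_i|=\binom{n-1}{i-1}\operatorname{Surj}(k,i)$ for every $i$; in particular $|\script W_1|=\binom{n-1}{0}\operatorname{Surj}(k,1)=1$, so $\script W$ contains a constant word $(c,c,\ldots,c)$. Every member of $\script W$ must intersect this word and hence must use the letter $c$, so $\script W$ is contained in the trivial family determined by $c$; since the two families have the same size, they coincide. Conversely, any trivial family is intersecting and, by the count above, has exactly $n^k-(n-1)^k$ elements (at level $i$ its supports are the full star of $i$-sets through the fixed letter, and all surjections onto each such support are present, so every term of the estimate is met). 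The argument has no genuine obstacle: the only steps calling for care are the per-level surjection bookkeeping and the evaluation of $\sum_i\binom{n-1}{i-1}\operatorname{Surj}(k,i)$---the latter being also where the mismatch with the displayed bound comes to light.
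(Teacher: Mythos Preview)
Your argument is correct, and you have also caught a genuine error: the size of a trivial family of $k$-words containing a fixed letter is $n^k-(n-1)^k$, not $n^k-n^{k-1}$ (e.g.\ for $n=3$, $k=2$ there are five $2$-words using the letter~$1$, not six), so both the statement and the final sentence of the paper's proof are misstated.

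The paper's route is closely related to yours but packaged differently. Rather than stratifying by $|\operatorname{supp}(W)|$, the paper partitions $\script W$ by the equivalence relation $\bowtie$, where $W\bowtie V$ iff $w_i=w_j\Leftrightarrow v_i=v_j$ for all $i,j$; a $\bowtie$-class is thus indexed by a set partition of $[k]$, and retaining only the first occurrence of each letter puts such a class in bijection with a family of $i$-permutations, to which Theorem~\ref{restricted words} is applied. Since there are $S(k,i)$ set partitions of $[k]$ into $i$ blocks and Theorem~\ref{restricted words} yields $i!\binom{n-1}{i-1}$ per class, the paper's bound is $\sum_i S(k,i)\,i!\binom{n-1}{i-1}=\sum_i\binom{n-1}{i-1}\operatorname{Surj}(k,i)$, exactly your sum. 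Your approach is the more direct one, appealing to EKR for sets without the detour through Theorem~\ref{restricted words}; the paper's approach buys consistency with its running theme of reducing each new object to the previous theorem. Your equality argument via $|\script W_1|=1$ forcing a constant word is the same idea the paper uses.
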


\begin{proof}
As before, let's try to relate this version of EKR back to what we already know.  Define an equivalence relation $\bowtie$ on $\script W$ as follows: for $W = w_1 \ldots w_{k}, V = v_1 \ldots v_{k}$, then $W\bowtie V$ when, for all $1\leq i,j\leq k$,
$$w_i = w_j \Leftrightarrow v_i = v_j.$$
Informally, we could say that $W$ and $V$ have the same pattern of equality of letters.  It is not hard to check that indeed $\bowtie$ is an equivalence relation.  If $\script E$ is an equivalence class of $\bowtie$, let $\script E'$ be the set obtained by converting all of $\script E$'s elements to $k$-permutations (in particular, by retaining only the first occurrence of each element).  So if $\script E = \{113233, 223433,\ldots\}$, we would have $\script E' = \{132,234,\ldots\}$.

We note that by construction, $\script{E'}$ is an intersecting family of $i$-permutations for some $i\leq k$, and hence by Theorem \ref{restricted words} has maximal size if it is a trivial family.  Furthermore, $|\script E| = |\script E'|$, so we see that $|\script W|$ is maximized if all of the $|\script E'|$ are maximized.  It is not hard to check that if $\script W$ is a trivial family, each $\script E'$ is a trivial family of $k$-permutations, and hence $|\script W|$ is maximized.  Conversely, if $|\script W|$ is maximal, then there must be an equivalence class containing sets with only one distinct element, implying that $\script W$ is trivial.  We calculate the size of a trivial collection as $n^k - n^{k-1}$, completing our proof.
\end{proof}

We have sampled only a quick bite of the diversity of generalizations available, and we hope we have whetted the reader's appetite for more.  In our next section, we consider a different definition of what it means to be intersecting.

\section{Families Under Hamming Intersection}

A notion from coding theory will shape our work in this next section.  Recall that the Hamming distance between two words is defined as the number of positions in which the two words differ (so the Hamming distance between $(1,3,5)$ and $(1,4,5)$ is 1).  Analogously, we define the size of the Hamming intersection between two words $W$ and $V$, $|W\cap_H V|$, to be the number of positions in which $W$ and $V$ agree.  Thus we have that $|(1,3,5)\cap_H (1,4,5) = 2$.

In this new context, we return to and extend our previous results for $k$-permutations and $k$-words (Hamming intersection is not defined for multisets).  First of all, we should start thinking about $k$-permutations.  The concepts we used before extend readily: a family $\script P$ is \textit{Hamming intersecting} if for each pair $P,Q\in\script P$, $|P\cap_H Q|\geq 1$, and a trivial family is one that has a fixed element appear in a fixed position for each permutation in the family (and contains all such $k$-permutations).

Notice that our proof of Theorem \ref{restricted words} is useless in this new context, and hence we must turn to a new technique.  As it turns out, we can use a variant of Katona's~\cite{katona} probabilistic method.  Ku and Leader~\cite{kuandleader} were the first to notice this, and they successfully developed a proof that involved examining bijections between $[n]\times [n]$ and $[n^2]$.  We present our own variant that is slightly simpler.

\begin{theorem}
\label{hamming restricted words}
Let $\script P$ be a Hamming intersecting family of $k$-permutations on $[n]$, $n\geq k$.  Then
$$|\script P| \leq \frac{(n-1)!}{(n-k)!}.$$
Furthermore, $|\script P| = \frac{(n-1)!}{(n-k)!}$ is attainable by the trivial family.  (However, we make no claim that it is only attainable by such a family.)
\end{theorem}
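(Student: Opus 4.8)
The plan is to adapt Katona's cyclic-permutation argument to the Hamming setting, replacing the role of an $n$-cycle by a pair of cyclic structures that govern positions and values simultaneously. Concretely, I would consider a random object consisting of a cyclic ordering $\tau$ of the $n$ positions together with a cyclic ordering $\rho$ of the $n$ values (or, following Ku--Leader, a single cyclic ordering of $[n]\times[n]$, but the two-cycle viewpoint seems cleaner). Given a ``window'' of $k$ consecutive slots in $\tau$ and a starting value in $\rho$, one reads off a $k$-permutation by assigning to the $j$-th position in the window the $j$-th value encountered in $\rho$. The first step is to make precise which $k$-permutations of $\script P$ are ``generated'' by a given $(\tau,\rho)$ and to show that the number of such generated permutations lying in $\script P$ is at most $k$, using Hamming-intersection in place of ordinary intersection: if $P$ is the lexicographically-first generated member of $\script P$ (with respect to window start), then any other generated member of $\script P$ must agree with $P$ in at least one coordinate, which forces its window to start within $k-1$ steps of $P$'s window start, exactly as in Katona's proof.

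Second, I would count, for a fixed $k$-permutation $Q = (q_1,\dots,q_k)$, the number of pairs $(\tau,\rho)$ that generate $Q$. One chooses where the window sits in $\tau$ and how the remaining $n-k$ positions are arranged ($n \cdot (n-k)!$ ways, up to the cyclic normalization), and independently one must place the values $q_1,\dots,q_k$ consecutively and in that order in $\rho$, with the other $n-k$ values arranged freely ($n \cdot (n-k)!$ ways, again up to cyclic normalization); the key point is that the ordered window pins down the relative positions of the $q_j$ in $\rho$ exactly. Dividing by the appropriate normalization for cyclic orderings, this count is the same positive constant $C$ for every $k$-permutation $Q$. Putting the two steps together via double counting over all $(\tau,\rho)$ gives $|\script P| \cdot C \le k \cdot (\text{number of }(\tau,\rho))$, and after substituting $C$ and simplifying, the $k$ cancels against a factor in $C$ and one is left with $|\script P| \le (n-1)!/(n-k)!$.

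For the equality clause, I would simply exhibit the trivial family: fix a position $i$ and a value $v$, and take all $k$-permutations of $[n]$ having $v$ in slot $i$; counting these gives $(n-1)\cdots(n-k+1) = (n-1)!/(n-k)!$, and this family is visibly Hamming intersecting, so the bound is attained. Since the theorem explicitly disclaims any uniqueness statement, no further analysis of the extremal case is needed.

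The main obstacle I anticipate is the bookkeeping around cyclic normalization in the counting step — getting the constant $C$ right so that the $k$ from the ``at most $k$ generated members'' bound cancels cleanly, and making sure the two cyclic structures really do contribute independent factors. A secondary subtlety is verifying the ``at most $k$'' bound carefully: one must check that every member of $\script P$ generated by $(\tau,\rho)$ genuinely shares a coordinate with the first one and that ``sharing a coordinate'' within this generation scheme translates into the window-start being within a block of length $k$, which requires that the value read into a given position depends monotonically on the window's starting offset in $\rho$. Once those two points are pinned down, the inequality follows by the same arithmetic as in the classical proof.
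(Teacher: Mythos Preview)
Your two-cycle setup is more elaborate than needed, and as stated it has a structural problem: a $k$-permutation $(p_1,\dots,p_k)$ has only $k$ positions (the indices $1,\dots,k$), not $n$, so a cyclic ordering $\tau$ of ``the $n$ positions'' has no natural meaning in this problem. If $\tau$ is stripped out, what remains is a single cyclic ordering of the $n$ values together with its $n$ cyclic length-$k$ subwords---and that is exactly the paper's argument, which uses one permutation $\sigma$ of $[n]$ and nothing more.

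The more serious issue is your ``at most $k$'' claim, imported directly from the classical Katona proof. In the Hamming setting the correct bound is \emph{at most one}: if $\sigma$ is a permutation of $[n]$ read cyclically, then two distinct length-$k$ subwords starting at positions $i\ne j$ have $\ell$th entries $\sigma(i+\ell-1)$ and $\sigma(j+\ell-1)$, and these can never coincide because $\sigma$ is a bijection. Hence no two distinct cyclic subwords of $\sigma$ can both lie in a Hamming-intersecting family. With ``at most $1$'' in hand the double count is simply
\[
|\script P|\cdot n(n-k)! \le n!\cdot 1,
\]
which rearranges immediately to $|\script P|\le (n-1)!/(n-k)!$; there is no factor of $k$ to cancel and no second cycle to introduce one. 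Your anticipated bookkeeping difficulty---arranging for the constant $C$ to carry a spare factor of $k$---is really a symptom of this: the number of permutations containing a fixed $k$-permutation as a cyclic subword is $n(n-k)!$, with no $k$ in sight, so the ``at most $k$'' route would leave you off by a factor of $k$. Your treatment of the equality clause via the trivial family is fine and matches the paper's.
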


\begin{proof}
Let $\sigma = \sigma(1) \sigma(2) \ldots \sigma(n)$ be a permutation of $\{1,2,\ldots,n\}$.  We see that when read cyclically, each $\sigma$ contains at most 1 element of $\script P$ as a subword, since any two distinct length $k$ subwords of $\sigma$ will not have any two letters in the same position.  Furthermore, each $k$-permutation in $\script P$ is a subword of exactly $n(n-k)!$ permutations ($n$ choices for the position of the head of the subword, and the other $n-k$ letters can be arranged in any order).  Thus we have that
$$|\script P| n (n-k)! \leq n!\cdot 1,$$
which simplifies to
$$|\script P| \leq \frac{(n-1)!}{(n-k)!}.$$
Furthermore, if we let $\script P$ be a trivial family, we have that
$$|\script P| = (n-1)(n-2)\cdots (n-k+1) = \frac{(n-1)!}{(n-k)!},$$
as desired.
\end{proof}

The remaining natural question is to ask is for the case of $k$-words with Hamming intersection.  At this point, the reader can likely predict how we're going to define intersecting families, but we include it for completeness.  We say that $\script W$ is a \textit{Hamming intersecting family of words} if for each $W,V\in\script W$, we have that $|W\cap_H V|\geq 1$, and a trivial family is one that contains all words having a certain fixed element appearing in a certain fixed position.

Words under Hamming intersection have come up in an EKR context in a variety of papers.  Ahlswede and Khachatrian~\cite{ahlswede98diametric} give an excellent overview of what has been done in this area.  In their paper, Ahlswede and Khachatrian prove an EKR-like theorem for the $t$-intersecting case.  Incidentally, Frankl and Tokushige~\cite{frankl-erdoskorado} came to the same theorem in a different context, utilizing another set of tactics.  While our methods are not powerful enough to attack the $t$-intersecting version, we use the toolkit we have been building in this paper to concisely prove the 1-intersecting case.

\begin{theorem}
\label{hamming words}
Let $W$ be a Hamming intersecting collection of $k$-words on $n$.  Then
$$|W| \leq n^{k-1}.$$
Also, we can obtain $|W| = n^{k-1}$ via a trivial family.
\end{theorem}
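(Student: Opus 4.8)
The plan is to imitate the proof of Theorem~\ref{hamming restricted words} by partitioning the set of all $k$-words into $n^{k-1}$ blocks, each of which can contain at most one word of any Hamming intersecting family. First I would identify the alphabet $[n]$ with $\mathbf{Z}_n$, the integers modulo $n$, so that the set of all $k$-words becomes $\mathbf{Z}_n^k$ with coordinatewise addition modulo $n$. Let $\mathbf{1} = (1,1,\ldots,1)$ and let $H = \{ j\mathbf{1} : 0 \le j \le n-1 \}$, a subgroup of $\mathbf{Z}_n^k$ of order $n$. The cosets of $H$ partition $\mathbf{Z}_n^k$ into $n^k / n = n^{k-1}$ blocks of size $n$, and this is the partition we will exploit.

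The one substantive step is the following observation: any two \emph{distinct} words lying in a common coset of $H$ are Hamming-disjoint. Indeed, such words have the form $V$ and $V + j\mathbf{1}$ with $j \not\equiv 0 \pmod{n}$, and if they agreed in some coordinate $\ell$ we would get $v_\ell \equiv v_\ell + j \pmod{n}$, hence $j \equiv 0 \pmod{n}$, a contradiction. Consequently a Hamming intersecting family $\script{W}$ contains at most one word from each of the $n^{k-1}$ cosets, which gives $|\script{W}| \le n^{k-1}$ at once.

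For the equality claim I would exhibit the trivial family $\script{W}_0 = \{ V \in \mathbf{Z}_n^k : v_1 = 1 \}$: any two of its members agree in their first coordinate, so $\script{W}_0$ is Hamming intersecting, and since the first coordinates of $V, V+\mathbf{1}, \ldots, V+(n-1)\mathbf{1}$ run over all of $\mathbf{Z}_n$, exactly one word of each coset lies in $\script{W}_0$, whence $|\script{W}_0| = n^{k-1}$ and the bound is met. The only ``difficulty'' here is spotting the right shifting subgroup; once $H$ is chosen, everything reduces to one-line verifications. Note finally that, exactly as in Theorem~\ref{hamming restricted words}, this argument shows that the extremal families are precisely the Hamming intersecting systems of distinct representatives for the cosets of $H$, of which there are many besides the trivial ones, so we neither expect nor claim a TONCAS-style uniqueness statement in this case.
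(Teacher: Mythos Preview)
Your argument is correct and takes a genuinely different route from the paper. Whereas the paper reduces to Theorem~\ref{hamming restricted words} by splitting $\script W$ into $\bowtie$-classes (words sharing the same pattern of letter repetitions) and converting each class into a Hamming intersecting family of shorter permutations, you bypass that reduction entirely: identifying the alphabet with $\mathbf{Z}_n$ and partitioning $\mathbf{Z}_n^k$ into the $n^{k-1}$ cosets of the diagonal subgroup $H=\langle\mathbf{1}\rangle$ gives a self-contained one-step proof, since distinct words in a common coset disagree in every coordinate. Your approach is shorter, does not rely on any earlier theorem, and yields the clean description of all extremal families as Hamming intersecting transversals of the $H$-cosets (correctly confirming that uniqueness of the trivial family should not be claimed). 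The paper's approach, by contrast, is chosen to illustrate the modular theme of the article---reducing one EKR-type question to another already in hand---so it fits the surrounding narrative even though it is less direct than yours.
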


\begin{proof}
Recall our equivalence relation $\bowtie$ from the proof of Theorem \ref{words}, where given $W = w_1 \ldots w_{k}, V = v_1 \ldots v_{k}$, $W\bowtie V$ when for all $1\leq i,j\leq k$,
$$w_i = w_j \Leftrightarrow v_i = v_j.$$
As before, let $\script E$ be an equivalence class of $\bowtie$, and define $\script E'$ as the set obtained by converting all of $\script E$'s elements to $k$-permutations.

We note that by construction, $\script E'$ is a Hamming intersecting collection of $i$-permutations for $i\leq n$, and hence by Theorem \ref{hamming restricted words} has maximal size if it is a trivial family.  Furthermore, $|\script E| = |\script E'|$, so we see that $|\script W|$ is maximized if all of the $|\script E'|$ are maximized.  It is not hard to check that if $\script W$ is a trivial collection, each $\script E'$ is a trivial construction of $k$-permutations, and hence $|\script W|$ is maximized.  We calculate the size of a trivial collection as $n^{k-1}$, completing our proof.
\end{proof}

\section{Conclusions and Future Directions}

At this point, we have visited a number of different generalizations of the Erd\H os--Ko--Rado Theorem.  In each of our proofs, we noticed that we required only elementary techniques to arrive at the desired conclusion.  Some of our results have long been discovered, but the methodology is the real gem to mine from this text.

Our results are only the tip of the iceberg, however.  There are many possible directions to go from here.  Some of our proofs generalize immediately to $t$-intersecting systems (where the size of the intersection is required to be at least $t$ instead of at least 1); others may not generalize at all.  We note that our proof of Theorem \ref{hamming words} in particular has potential to be generalized to $t$-Hamming intersecting collections (which would be essentially the same as the Ahlswede and Khachatrian~\cite{ahlswede98diametric}---Frankl and Tokushige~\cite{frankl-erdoskorado} result).  However, our generalization would rely on the following conjecture:

\begin{conjecture}
\label{hamming restricted words conjecture}
Let $\script P$ be a $t$-Hamming intersecting collection (every $P,Q\in\script P$ satisfies $|P\cap_H Q|\geq t$) of $k$-permutations on $[n]$, $n\geq n_0 (k,t)$.  Then
$$|W|\leq \frac{(n-t)!}{(n-k)!}.$$
\end{conjecture}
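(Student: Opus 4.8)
The plan is to adapt the cyclic-permutation argument from Theorem \ref{hamming restricted words} to the $t$-intersecting setting, and to identify the threshold $n_0(k,t)$ explicitly. First I would fix a permutation $\sigma = \sigma(1)\cdots\sigma(n)$ of $[n]$, read cyclically, and count occurrences of members of $\script P$ as length-$k$ subwords of $\sigma$ just as before. Each $k$-permutation $P \in \script P$ appears as a subword of exactly $n(n-k)!$ permutations, so the identity we want is $|\script P|\cdot n(n-k)! \le n! \cdot c$, where $c$ is an upper bound on the number of members of $\script P$ that a single $\sigma$ can contain. For the $t=1$ case Ku--Leader--style reasoning gave $c = 1$; here the task is to show $c \le (n-t)!/(n-k)! \cdot (n-k)!/((n-1)!/\ldots)$ — more precisely, to show that whatever bound on $c$ we prove, it rearranges to $|\script P| \le (n-t)!/(n-k)!$, i.e. we need $c \le (n-t)!/((n-1)!/(n-k)!) \cdot \ldots$; cleaning this up, we need $c \le n!/(n(n-k)!) \cdot (n-k)!/(n-t)! = (n-1)!/(n-t)!$. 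So the combinatorial heart is: \emph{how many length-$k$ cyclic subwords of a single $\sigma$ can pairwise agree in $\ge t$ positions?}

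The natural approach to bounding $c$ is as follows. Two cyclic subwords of $\sigma$ starting at positions $a$ and $b$ (with $0 \le b - a < k$, say) agree exactly in the positions of their overlap that are ``aligned,'' and since $\sigma$ is a permutation no letter repeats, two subwords starting at \emph{different} positions agree in a position only if that position is literally the same index of $\sigma$; one checks that subwords starting at positions $a$ and $a+d$ (cyclically, $0 < d < k$) agree in exactly $k - d$ positions. Hence the set of starting positions realized by $\script P \cap \sigma$ must be a set $D \subseteq \mathbb{Z}/n\mathbb{Z}$ such that any two elements of $D$ are within cyclic distance $k - t$ of each other (so that the overlap is at least $t$). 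Since such a $D$ lies in an arc of length $k - t + 1$ (for $n$ large enough that ``cyclic distance $\le k-t$'' forces this — here is where $n_0(k,t)$ enters, roughly $n_0 \ge 2(k-t)+1$ suffices to rule out the wrap-around pathology), we get $c = |D| \le k - t + 1$. But $k - t + 1$ is far larger than the $(n-1)!/(n-t)!$ we need, so a raw count of starting positions is nowhere near enough.

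The fix is to not throw away the internal structure: I would instead count, over all $\sigma$, pairs $(\sigma, P)$ with $P \in \script P$ contained in $\sigma$, but weight or stratify more cleverly — for instance, condition on the \emph{identity of the first $t$ letters in the head block}, or iterate Katona's argument on the ``companion'' structure obtained after deleting an agreed coordinate. Concretely: if all members of $\script P$ contained in $\sigma$ pairwise share a fixed position $j$ of $\sigma$ with a fixed letter, then after restricting to that letter-position we have a Hamming $(t-1)$-intersecting family of $(k-1)$-permutations on $[n-1]$ and can induct. The residual case — where the shared positions are not globally fixed — is exactly the obstruction, and handling it is where the threshold $n_0(k,t)$ must be made large: for $n$ large, a sunflower-type or shifting argument should force a common fixed coordinate among the subwords of a generic $\sigma$, collapsing to the inductive case. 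I expect \textbf{this last step — ruling out the ``floating intersection'' configurations for large $n$, and extracting from them the clean bound $(n-1)!/(n-t)!$ on $c$ rather than the trivial $k-t+1$ — to be the main obstacle}, and indeed it is presumably why the authors leave it as a conjecture rather than a theorem: the elementary double-counting sees only the coarse arc bound, and squeezing out the factorial requires either an induction whose base cases are delicate or an appeal to exactly the kind of shifting/compression machinery this paper is trying to avoid.
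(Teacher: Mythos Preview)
This statement is a \emph{conjecture} in the paper, not a theorem; the authors give no proof and explicitly remark that ``the Katona-style probabilistic argument we used in Theorem~\ref{hamming restricted words} breaks down for $t$-intersecting families.'' So there is no paper proof to compare against---your proposal is an attempt at an open problem.

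Your proposal has a fatal arithmetic inversion. From $|\script P|\cdot n(n-k)! \le n!\cdot c$ one gets $|\script P| \le c\cdot(n-1)!/(n-k)!$, so to conclude $|\script P|\le (n-t)!/(n-k)!$ you need
\[
c \;\le\; \frac{(n-t)!}{(n-1)!},
\]
not $(n-1)!/(n-t)!$ as you wrote. For $t\ge 2$ this forces $c<1$, hence $c=0$, which is impossible since any single $P\in\script P$ sits inside some $\sigma$. The double-counting framework therefore cannot reach the conjectured bound for $t\ge 2$; at best it reproduces the $t=1$ bound $(n-1)!/(n-k)!$. This is precisely the breakdown the paper flags.

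A second error masks the first. You claim that cyclic subwords of $\sigma$ starting at positions $a$ and $a+d$ ``agree in exactly $k-d$ positions.'' They do not: they share $k-d$ \emph{letters of $\sigma$}, but those letters occupy \emph{different coordinates} in the two $k$-tuples (the letter $\sigma(a+d)$ is coordinate $d+1$ of the first subword and coordinate $1$ of the second). Since $\sigma$ has no repeated letters, two distinct cyclic subwords of $\sigma$ have Hamming intersection exactly zero. Hence $c\le 1$ always---which is why the $t=1$ proof works and why nothing stronger can be squeezed from this setup. Your later inductive/sunflower sketch inherits the same ceiling: extracting a common fixed coordinate among the subwords of a single $\sigma$ is vacuous once you realize there is at most one such subword.
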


One should note that the Katona-style probabilistic argument we used in Theorem \ref{hamming restricted words} breaks down for $t$-intersecting families.  However, all hope is not lost.  Recently, there has been been some work done on trying to generalize Katona's proof to the 2- and 3-intersecting cases; see Howard and K\'arolyi~\cite{howard-towards} for details.

Another conjecture we would like to pose is the following:

\begin{conjecture}
Let $\script M$ be a $t$-intersecting collection of $k$-multisets on $[n]$, $n\geq n_0 (k,t)$.  Then
$$|\script M|\leq\binom{n+k-t-1}{k-t}.$$
Furthermore, equality is achieved if and only if $M$ is a trivial collection.
\end{conjecture}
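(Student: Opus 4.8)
The plan is to mimic the partition-and-reduce strategy that worked in Theorem~\ref{multisets}, replacing the ordinary EKR input with the $t$-intersecting EKR theorem of Wilson (valid for $n \geq (t+1)(k-t+1)$, which will dictate the value of $n_0(k,t)$). First I would partition $\script M$ into the classes $\script S_i$ consisting of those $k$-multisets with exactly $i$ distinct underlying elements; as before, each $\script S_i$ inherits the $t$-intersecting property, and its set of reductions $\script S_i'$ is a $t$-intersecting family of $i$-sets on $[n]$. For $i \geq t$ Wilson's theorem gives $|\script S_i'| \leq \binom{n-t}{i-t}$ (and for $i < t$ the family $\script S_i'$ must actually be empty once $n$ is large, since two $i$-sets cannot share $t > i$ elements—this is a point to handle carefully rather than wave away). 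Multiplying by the stars-and-bars factor $\binom{k-1}{i-1}$ counting $k$-multisets reducing to a fixed $i$-set, I get $|\script S_i| \leq \binom{k-1}{i-1}\binom{n-t}{i-t}$.

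The heart of the argument is then the binomial identity
$$\sum_{i=t}^{k} \binom{k-1}{i-1}\binom{n-t}{i-t} = \binom{n+k-t-1}{k-t},$$
which I would verify by reindexing $j = i - t$ and recognizing the left side as $\sum_j \binom{k-1}{j+t-1}\binom{n-t}{j}$, a Vandermonde-type convolution; after flipping one factor via $\binom{n-t}{j} = \binom{n-t}{n-t-j}$ it collapses to a single binomial coefficient. This is the exact analogue of the ``massaging'' step in Theorem~\ref{multisets}, specializing to the known identity there when $t = 1$.

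For the equality case, as in Theorem~\ref{multisets} I would argue that maximality forces equality in every summand. The key summand is $i = t$: equality there says $|\script S_t| = \binom{k-1}{t-1}\binom{n-t}{0} = \binom{k-1}{t-1}$, the number of $k$-multisets whose support is one fixed $t$-set. Thus $\script M$ contains every $k$-multiset supported on some fixed $t$-set $C$, and since $\script M$ is $t$-intersecting, every member of $\script M$ must contain all of $C$—so $\script M$ is the trivial family built on the core $C$. Finally a direct stars-and-bars count confirms the trivial family has exactly $\binom{n+k-t-1}{k-t}$ members.

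The main obstacle is not the identity (routine) but pinning down $n_0(k,t)$ honestly. Two things must be absorbed into it: Wilson's hypothesis $n \geq (t+1)(k-t+1)$, and—more delicately—the need to rule out ``small-support'' anomalies and to ensure the equality analysis is not spoiled by the $i < t$ classes or by degenerate behavior of $\binom{n-t}{i-t}$ when $n - t < i - t$. I expect that taking $n_0(k,t) = (t+1)(k-t+1)$ suffices, but verifying that the inequality is strict unless $\script M$ is trivial (rather than merely $\leq$) is where the care is required, and it may force a slightly larger threshold or a short separate argument for the boundary classes.
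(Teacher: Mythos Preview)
The paper states this as an open conjecture and offers no proof, so there is nothing to compare against; the relevant question is whether your argument actually works. It does not, and the failure is at the very first substantive step. You assert that ``each $\script S_i$ inherits the $t$-intersecting property, and its set of reductions $\script S_i'$ is a $t$-intersecting family of $i$-sets.'' The first clause is true (as multisets), but the second is false for $t\ge 2$: passing from a multiset to its support can strictly shrink the intersection. Concretely, take $t=2$, $k=4$, and let $\script M$ be the (genuinely extremal) family of all $4$-multisets containing at least two copies of~$1$. Inside $\script S_2$ you find $\{1,1,1,2\}$ and $\{1,1,1,3\}$, which $2$-intersect as multisets, yet their supports $\{1,2\}$ and $\{1,3\}$ meet in a single point. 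So $\script S_2'$ is not $2$-intersecting and Wilson's bound $\binom{n-t}{i-t}$ simply does not apply. This is exactly why the $t=1$ proof in Theorem~\ref{multisets} goes through---one shared element survives reduction---and why the same partition-and-reduce template breaks for larger~$t$.

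A smaller but related slip: for $i<t$ you claim $\script S_i'$ ``must actually be empty once $n$ is large,'' but a one-element family is vacuously $t$-intersecting, so $\script S_i'$ can perfectly well be a singleton (e.g.\ $\{1,1,\dots,1\}\in\script S_1$ forces nothing about emptiness). Your summand there would be $\binom{k-1}{i-1}\binom{n-t}{i-t}=0$, undercounting. This alone is patchable, but it is a symptom of the same underlying issue: multiset $t$-intersection and set $t$-intersection of supports are different conditions, and your reduction collapses precisely the multiplicity information that carries the intersection. The Vandermonde identity and the equality analysis are fine in isolation, but they are sitting on top of a bound for $|\script S_i|$ that you have not established.
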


An interesting note is that the conjecture, if true, would imply that there are as many $t$-intersecting $k$-multisets on $[n]$ as there are $t$-intersecting $k$-sets on $[n+k-1]$, given appropriate $n,k,t$.

In any case, we hope that the reader has enjoyed this foray into elementary techniques for Erd\H os--Ko--Rado results.  Our paper is far from the first, and hopefully far from the last, to attempt to find simple proofs for these facts.  We hope that at the very least, we have inspired the reader to consider simple methods as tools for constructing elegant solutions to general problems.

\section{Acknowledgements}

The authors are indebted to Dr. Anant Godbole for his supervision at the 2008 East Tennessee State University REU.  This work was supported by NSF grant 0552730.

\bibliographystyle{amsplain}
\bibliography{ekrbib}

\begin{tabular}{p{3in}l}
\small\textsc{Greg Brockman} & \small\textsc{Bill Kay}\\[-5pt]
\small\textsc{Harvard University} &\small\textsc{University of South Carolina}\\[-5pt]
\small\textsc{Cambridge, MA}&\small\textsc{Columbia, SC}\\[-5pt]
\small\textsc{United States}&\small\textsc{United States}\\[-5pt]
\small \verb|gbrockm@fas.harvard.edu|&\small\verb|kayw@mailbox.sc.edu|
\end{tabular}\\

\end{document}